\newtheorem{thm}{Theorem}[section]
\newtheorem{lem}[thm]{Lemma}
\newtheorem{fact}[thm]{Fact}
\theoremstyle{definition}
\newtheorem{defn}[thm]{Definition}
\theoremstyle{remark}
\begin{document}

\title{On the Existence of $t$-Identifying Codes in Undirected De Bruijn Graphs}%
\author{Victoria Horan \thanks{\texttt{victoria.horan.1@us.af.mil}} \\  Air Force Research Laboratory \\ Information Directorate}

\date{\today}%

\maketitle

\let\thefootnote\relax\footnote{Approved for public release; distribution unlimited:  88ABW-2015-3796.}

\begin{abstract}
This paper proves the existence of $t$-identifying codes on the class of undirected de Bruijn graphs with string length $n$ and alphabet size $d$, referred to as $\mathcal{B}(d,n)$.  It is shown that $\mathcal{B}(d,n)$ is $t$-identifiable whenever:
\begin{itemize}
    \item  $d \geq 3$ and $n \geq 2t$, and $t \geq 1$.
    \item  $d \geq 3$, $n \geq 3$, and $t=2$.
    \item  $d = 2$, $n \geq 3$, and $t=1$.
\end{itemize}
The remaining cases remain open.  Additionally, we show that the eccentricity of the undirected non-binary de Bruijn graph is $n$.
\end{abstract}

\section{Introduction and Background}

Let $x \in V(G)$, and define the ball of radius $t$ to be the set of all vertices $y$ with $d(x,y) \leq t$.  The formal definition of a $t$-identifying code on a graph $G$ is as follows.

\begin{defn}
    A subset $S \subseteq V(G)$ is a $t$-identifying code in a graph $G$ if the following conditions are met.
    \begin{enumerate}
        \item  For all $x \in V(G)$, $B_t(x) \cap S \neq \emptyset$.
        \item  For all $x, y \in V(G)$ with $x \neq y$, we must have $B_t(x) \cap S \neq B_t(y) \cap S$.
    \end{enumerate}
\end{defn}

The first condition in the definition requires that $S$ be a \textit{dominating set}.  The second condition requires that each vertex's \textit{identifying set} (the sets $B_t(x) \cap S$ and $B_t(y) \cap S$) is unique.  To settle the question of existence of $t$-identifying codes in a graph, we will rely on the following fact.  If a $t$-identifying code exists in a graph $G$, then we say that $G$ is $t$-identifiable.  If the variable $t$ is omitted, then we may assume that $t=1$.

\begin{defn}
    Two vertices $x,y$ are $t$-twins if $B_t(x)=B_t(y)$.
\end{defn}

\begin{fact}
    A graph is $t$-identifiable if and only if it does not contain any $t$-twins.
\end{fact}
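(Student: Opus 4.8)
The plan is to prove the biconditional by establishing the two implications separately, handling the ``only if'' direction by contraposition.

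For the contrapositive of the ``only if'' direction, I would suppose that $G$ contains a pair of distinct $t$-twins $x \neq y$, so that $B_t(x) = B_t(y)$ by definition. The key observation is that for \emph{any} subset $S \subseteq V(G)$ we then have
\[
  B_t(x) \cap S = B_t(y) \cap S,
\]
simply because the balls themselves coincide before intersecting with $S$. Hence the second (separating) condition in the definition of a $t$-identifying code fails for every candidate $S$, so no $t$-identifying code can exist and $G$ is not $t$-identifiable. This shows that $t$-identifiability forces the absence of $t$-twins.

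For the ``if'' direction, I would argue constructively by exhibiting an explicit code whenever $G$ has no $t$-twins, and the natural choice is to take $S = V(G)$. I would verify the two defining conditions. For domination, note that $x \in B_t(x)$ since $d(x,x) = 0 \leq t$, so $B_t(x) \cap V(G) = B_t(x)$ contains $x$ and is nonempty for every vertex $x$. For separation, observe that $B_t(x) \cap V(G) = B_t(x)$, so for distinct $x, y$ the sets $B_t(x) \cap S$ and $B_t(y) \cap S$ coincide exactly when $B_t(x) = B_t(y)$, that is, exactly when $x$ and $y$ are $t$-twins. Since we assumed $G$ has no $t$-twins, these sets differ for every pair $x \neq y$, and both conditions hold. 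Thus $S = V(G)$ is a $t$-identifying code and $G$ is $t$-identifiable.

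There is no serious obstacle here; the statement is essentially a reformulation of the two code axioms once one notices that the entire vertex set is always a legitimate candidate code. The only point requiring a moment's care is the backward direction, where one must remember that the domination condition is automatically satisfied by $S = V(G)$ because every vertex lies in its own radius-$t$ ball, so the absence of $t$-twins alone suffices to guarantee both axioms simultaneously.
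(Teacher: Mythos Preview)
Your proof is correct and is the standard argument for this well-known equivalence. Note, however, that the paper does not actually prove this statement: it is recorded as a \emph{Fact} without proof, treated as folklore background. So there is no paper proof to compare against; your argument is exactly the one that would typically be given if a proof were supplied.
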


Next we will define the class of de Bruijn graphs.  A good reference for the de Bruijn graphs and some of their properties is \cite{Baker}.  First, we define $[d]=\{0,1,2, \ldots, d-1\}$ (note that this definition is non-standard).  Then we define the de Bruijn graph as follows.

\begin{defn}
    Define the set $\mathcal{S}(d,n)$ to be the set of all strings of length $n$ over the alphabet $[d]$.  The directed de Bruijn graph $\vec{\mathcal{B}}(d,n)$ is the graph with vertex set $V = \mathcal{S}(d,n)$, and edge set $E = \mathcal{S}(d, n+1)$.  An edge $x_1 x_2 \ldots x_{n+1}$ denotes the edge from vertex $x_1 x_2 \ldots x_n$ to vertex $x_2 x_3 \ldots x_{n+1}$.  The undirected de Bruijn graph $\mathcal{B}(d,n)$ is $\vec{\mathcal{B}}(d,n)$ with undirected edges.
\end{defn}

Identifying codes were first introduced and defined in \cite{first}.  They have many interesting applications, such as efficiently placing smoke detectors in a house to provide maximum location information.  They are related to (but different from) dominating sets, perfect dominating sets, locating dominating sets, and many more types of vertex subsets.  In general, the problem of finding an identifying code in a graph is an NP-complete problem \cite{NPHard}.  Results on the existence and construction of identifying codes on the directed de Bruijn graph can be found in \cite{BH}.

In Section \ref{NBSection} we prove results on existence in $\mathcal{B}(d,n)$ for $d > 2$, while Section \ref{BSection} considers the case when $d=2$ separately.  Finally, we conclude with some open problems.

\section{Non-Binary de Bruijn Graphs}\label{NBSection}

We begin with our main result.

\begin{thm}\label{LargeN}
    $\mathcal{B}(d,n)$ is $t$-identifiable for $d \geq 3$ and $n \geq 2t$.
\end{thm}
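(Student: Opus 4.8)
By the Fact quoted above it suffices to prove that $\mathcal{B}(d,n)$ contains no $t$-twins; that is, for each pair of distinct vertices $x\neq y$ I must exhibit a single vertex $z$ lying in exactly one of the balls $B_t(x)$ and $B_t(y)$. The plan is to manufacture such a $z$ that is manifestly close to $x$ and then to certify, by a structural description of the distance-$t$ balls, that $y$ cannot reach it in $t$ steps.

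The backbone is a \emph{necessary condition} for membership in $B_t(x)$. Reading an undirected edge as a one-step shift of the length-$n$ window that either drops the first letter and appends a free letter (a left shift) or drops the last letter and prepends a free letter (a right shift), a walk of length $m$ moves the window to a net offset $f$ with $|f|\le m$ and rewrites at most $m$ letters, each of which must enter at a boundary. From the observation that a letter of $x$ at an interior tape position cannot be overwritten cheaply, I will extract: if $d(x,z)\le t$ then there are integers $L,R\ge 0$ with $L+R\le t$ and an offset $f$ with $-L\le f\le R$ such that $z_{\,j-f}=x_j$ for every $j$ in the \emph{forced interval} $[\,R+1,\;n-L\,]$. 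Two features drive everything. First, this interval always has length at least $n-t$. Second, since $R\le t$ and $L\le t$, it always contains the central interval $[\,t+1,\,n-t\,]$; and although that central interval is empty precisely when $n=2t$, any interval of length $\ge n-t$ inside $[1,n]$ must still contain one of the two central positions $t$ or $t+1$. I only need this one direction: its contrapositive lets me conclude $z\notin B_t(y)$ once every admissible shift has been blocked.

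With the lemma in hand, fix distinct $x\neq y$ and a coordinate $p$ with $x_p\neq y_p$. The hypothesis $n\ge 2t$ supplies a deep central region into which I can carry the distinguishing letter $x_p$, and $d\ge 3$ supplies a spare symbol with which to overwrite a short prefix and suffix. I will take $z\in B_t(x)$ to be a boundedly shifted copy of $x$, realized by an explicit short walk (so that $z\in B_t(x)$ is automatic), arranged so that $x_p$ occupies a central position of $z$ while the overwritten boundary letters use the spare symbol. For each of the finitely many candidate offsets $f'$ with $|f'|\le t$ the goal is then to produce a position inside the corresponding forced interval at which $z$ disagrees with the $f'$-shifted copy of $y$; the central-interval observation guarantees that such a witness lives in the band $[t+1,n-t]$ (or in $\{t,t+1\}$ when $n=2t$) and can be pinned to the single coordinate $p$ once the alignment is fixed. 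Ruling out all $f'$ then yields $z\notin B_t(y)$ by the lemma.

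The hard part is the boundary case $n=2t$ together with strings $x,y$ that differ in only one coordinate and are highly periodic. There the central band collapses to the two positions $\{t,t+1\}$, so no single fixed witness coordinate need survive all $2t+1$ admissible shifts simultaneously, and a periodic $x$ may admit a nonzero shift that accidentally realigns it with $y$. Overcoming this is exactly where both hypotheses are used essentially: I will choose the overwritten boundary letters of $z$ from the third alphabet symbol so as to destroy every periodic realignment, and then dispatch the remaining handful of shifts by a direct check. Once every alignment of $z$ with $y$ has been eliminated, the lemma gives $z\notin B_t(y)$ while $z\in B_t(x)$ by construction, so $x$ and $y$ are not $t$-twins and the theorem follows.
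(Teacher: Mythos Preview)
Your structural lemma is correct and is essentially a reparametrisation of the paper's Lemma~2.1: any $z\in B_t(x)$ retains a contiguous block of $x$ of length at least $n-t$, sitting at some offset $f$ with $|f|\le t$. But the proposal has a genuine gap at exactly the point you yourself flag. You never specify $z$: which shift, which boundary positions are overwritten, and with which symbol. Without that, the assertion that for every admissible triple $(L,R,f')$ you can ``produce a position inside the corresponding forced interval at which $z$ disagrees with the $f'$-shifted copy of $y$'' is a promise, not an argument. When $n=2t$ the intersection of all forced intervals for a fixed offset can be empty, so no single witness coordinate handles all $(L,R)$ at once; you would need to manufacture, from at most $t$ overwritten boundary letters, a separate disagreement for each of the $O(t^2)$ triples, and for periodic $x,y$ there is no indication in your sketch of why the ``direct check'' terminates favourably. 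This is the missing idea.

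The paper takes a quite different route that sidesteps the per-shift analysis entirely. Instead of building one $z$ and defeating every alignment, it uses the case split $x_1\cdots x_{n-t}\neq y_1\cdots y_{n-t}$ versus $x_{t+1}\cdots x_n\neq y_{t+1}\cdots y_n$ (which exhausts all possibilities precisely because $n\ge 2t$), and in the first case looks at the whole family $[d]^t\oplus x_1\cdots x_{n-t}\subseteq B_t(x)$. A counting lemma derived from the same structural description shows that at most $1-d^{\lfloor t/2\rfloor}+2\sum_{j=0}^{t-1}d^j$ distinct $t$-prefixes occur in $B_t(y)\setminus\bigl([d]^t\oplus y_1\cdots y_{n-t}\bigr)$; for $d\ge 3$ this is strictly below $d^t$, so by pigeonhole some prefix---hence some $z$---lies in $B_t(x)\setminus B_t(y)$. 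The boundary case $n=2t$ requires no special treatment in that argument.
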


To prove this theorem, we will use several lemmas that we prove first.  The first lemma (from \cite{LiuPaths}) is stated using our own terminology and with our own discussion/proof.

\begin{lem}\label{ball}
    The strings in $B_t(x)$ for $x = x_1x_2 \ldots x_n$ must be in one of the following three sets.
\begin{enumerate}
    \item  $\{x\}$;
    \item  $[d]^g \oplus x_{b-f+1} \ldots x_{n-f} \oplus [d]^{b-g}$ with $b > f, b>g, f+b+g \leq t$;
    \item  $[d]^{f-c} \oplus x_{b+1} \ldots x_{n-f+b} \oplus [d]^c$ with $f > b, f>c, b+f+c \leq t$.
\end{enumerate}
\end{lem}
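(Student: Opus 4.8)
The plan is to translate walks in $\mathcal{B}(d,n)$ into elementary shift operations on strings and then read off the reachable set from the geometry of the resulting window. First I would note that a single undirected edge out of $u = u_1 \cdots u_n$ either deletes $u_1$ and appends a free symbol on the right (a left shift, coming from an out-edge) or deletes $u_n$ and prepends a free symbol on the left (a right shift, coming from an in-edge). Hence any walk of length $\ell \le t$ starting at $x$ is just a sequence of left/right shifts, and it is natural to picture $x$ as occupying cells $1, \ldots, n$ of a bi-infinite tape and to track the trajectory $a_0 = 1, a_1, \ldots, a_\ell$ of the left end of the window, where $a_{j+1} = a_j \pm 1$ at each step and each newly revealed cell receives an arbitrary symbol.

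The central structural claim I would prove is that, after such a walk, a cell $p$ of the final window still holds its original symbol $x_p$ if and only if it lies in the intersection of all windows visited, namely $p \in [\,\max_j a_j,\ \min_j a_j + n - 1\,]$; every other cell of the final window was vacated and later re-entered (or was never original) and therefore carries an independently free symbol. Writing $M = \max_j a_j - 1 \ge 0$ for the furthest right excursion, $m = 1 - \min_j a_j \ge 0$ for the furthest left excursion, and $s = a_\ell - 1$ for the net displacement, this says the reachable strings are exactly those of the form $[d]^{M-s} \oplus x_{M+1} \cdots x_{n-m} \oplus [d]^{m+s}$, with the forced middle block fixed by $M, m$ and its placement inside the length-$n$ string fixed by $s$. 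A short lattice-path count then gives that the least number of shifts realizing a prescribed triple $(M,m,s)$ is $2M + 2m - |s|$, so $y$ lies in $B_t(x)$ precisely when $2M + 2m - |s| \le t$.

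To finish, I would split on the sign of $s$. When $M = m = s = 0$ the middle block is all of $x$ and we obtain $\{x\}$, the first set. When the window ends forward of its start I would match the second form by the substitution $f = m$, $b = M + m$, $g = M - s$, under which the budget $2M + 2m - |s|$ becomes exactly $f + b + g$ and the middle block $x_{M+1}\cdots x_{n-m}$ becomes $x_{b-f+1} \cdots x_{n-f}$; the backward case is symmetric, giving the third form via $b = M$, $f = M + m$, $c = m + s$. I expect the main obstacle to be the structural claim that the preserved cells are exactly the commonly covered ones: one must argue that once a boundary cell is vacated and later re-entered its symbol is genuinely unconstrained, and that all such free choices are mutually independent. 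The second source of friction is the exhaustive bookkeeping that reconciles the forced substring, the two wildcard lengths, and the cost with the stated parametrization across the forward, backward, and (net-displacement-zero) boundary configurations.
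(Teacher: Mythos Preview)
Your approach is correct and is essentially the paper's argument unpacked: the paper's FBF/BFB shortest-path decomposition is exactly your observation that an optimal lattice walk visits one extreme, then the other, then the endpoint, and your $(M,m,s)$ parameters correspond to the paper's $(f,b,g)$ and $(b,f,c)$ via the very substitutions you record. Your window-trajectory framing and the explicit cost $2M+2m-|s|$ are a tidier way to say the same thing, and the boundary (net-zero-displacement) cases you flag are precisely where the strict inequalities $b>f,\;b>g$ (resp.\ $f>b,\;f>c$) need the most care.
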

\begin{proof}
    All strings in $B_t(x)$ can be described by following forward or backward edges.  The strings of type (1) are reached by taking no moves.  All other strings (types (2) and (3)) are reached by taking either moves of type FBF (forward-backward-forward) or BFB (backward-forward-backward).  We will describe \textit{shortest} paths within these confines.  We define $f$ steps forward from vertex $x_1x_2 \ldots x_n$ as reaching vertices in the set: $$[d]^f \oplus x_1 \ldots x_{n-f}.$$  We define $b$ steps backward from vertex $x_1x_2 \ldots x_n$ as reaching vertices in the set : $$x_{b+1} \ldots x_n \oplus [d]^b.$$

If FBF is the shortest path to reach some vertex $y$ from $x$, then we must follow $f$ edges forward, $b$ edges backward, and $g$ edges forward, with the constraints that $b>f$, $b>g$, and $f+b+g \leq t$. Following these sequences, we arrive at strings of type (2).

If BFB is the shortest path to reach some vertex $y$ from $x$, then we must follow $b$ edges backward, $f$ edges forward, and $c$ edges backward, with the constraints that $f>b$, $f>c$, and $b+f+c \leq t$. Following these sequences, we arrive at strings of type (3).
\end{proof}

Next, we will look at the possible $t$-prefixes that can appear in a special subset of $B_t(y)$.  A $t$-prefix of a string $x_1x_2 \ldots x_n$ is simply the first $t$ letters:  $x_1x_2 \ldots x_t$.  Since $[d]^t \oplus y_1y_2 \ldots y_{n-t} \subseteq B_t(y)$, if we consider the whole set $B_t(y)$ then every possible $t$-prefix must appear.  Instead, we want to determine an upper-bound on the number of distinct $t$-prefixes in $B_t(y) \setminus \left( [d]^t \oplus y_1y_2 \ldots y_{n-t} \right)$.  Eventually, we will show that this number of $t$-prefixes is smaller than $d^t$, so we will always be able to choose a $t$-prefix outside of this special subset.

\begin{lem}\label{counting}
    For $n \geq 2t$, the number of distinct $t$-prefixes in $B_t(y) \setminus [d]^t \oplus y_1y_2 \ldots y_{n-t}$ is at most  $$1-d^{\lfloor{ t/2\rfloor}} + 2 \cdot \sum_{j=0}^{t-1} d^j.$$
\end{lem}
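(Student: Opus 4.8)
The plan is to reduce the statement to a combinatorial count over the $t$-prefixes produced by the three families of Lemma~\ref{ball}, organized by the length of the ``free block'' at the front of the prefix.

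First I would record the shape of a $t$-prefix. By Lemma~\ref{ball} every vertex of $B_t(y)$ is $y$ itself, or lies in a type~(2) set $[d]^{g}\oplus y_{b-f+1}\cdots y_{n-f}\oplus[d]^{b-g}$, or a type~(3) set $[d]^{f-c}\oplus y_{b+1}\cdots y_{n-f+b}\oplus[d]^{c}$. In each case the string starts with a block of free symbols (of length $g$, resp. $f-c$) followed by a contiguous substring (a ``window'') of $y$. The hypothesis $n\ge 2t$ is exactly what guarantees this middle window has length at least $t$, so the first $t$ symbols never reach the trailing free block; hence every $t$-prefix occurring in $B_t(y)$ has the form $[d]^{\ell}\oplus y_{s}y_{s+1}\cdots y_{s+t-\ell-1}$, determined by a free-block length $\ell\ge 0$ and a window start $s\ge 1$. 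I would then translate the constraints $b>f,\ b>g,\ f+b+g\le t$ (and their type~(3) analogues) into the set of admissible pairs $(\ell,s)$, noting that deleting $[d]^{t}\oplus y_1\cdots y_{n-t}$ corresponds to removing the single configuration $(\ell,s)=(t,1)$, the one that alone would force all $d^{t}$ prefixes.

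Next I would exploit a nesting phenomenon to control over-counting. The prefix set for $(\ell,s)$ is contained in that for $(\ell',s')$ whenever $\ell'>\ell$ and $s'-\ell'=s-\ell$, since promoting a fixed symbol of $y$ into the free block only enlarges the set of realizable prefixes (the window of $(\ell',s')$ is a suffix of the window of $(\ell,s)$). Grouping the admissible pairs along the diagonals $s-\ell=\mathrm{const}$, the number of distinct prefixes is therefore at most $\sum_{D} d^{M(D)}$, where $M(D)$ is the largest admissible free-length on diagonal $D$. Computing $M(D)$ from the inequalities of types~(2) and~(3) is the technical heart: the forward-leaning type~(3) configurations contribute one window family and the backward-leaning type~(2) configurations contribute another, and this is what produces the factor $2$ in $2\sum_{j=0}^{t-1}d^{j}$, while the vertex $y$ and the endpoint diagonals account for the additive~$1$.

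The hard part will be the exact bookkeeping near the middle free-length. The forward and backward families meet around $\ell=\lfloor t/2\rfloor$, where a nearly balanced path can realize the same window in two ways, and that coincidence is precisely what the $-d^{\lfloor t/2\rfloor}$ term corrects; counting this overlap exactly once, rather than zero or twice, is the delicate step and is where the floor (and hence the parity of $t$) enters. Once $M(D)$ is pinned down on every diagonal, the sum $\sum_{D}d^{M(D)}$ collapses to a geometric series and should yield a bound of the stated form $1-d^{\lfloor t/2\rfloor}+2\sum_{j=0}^{t-1}d^{j}$; I would calibrate the boundary terms by checking $t=1$ and $t=2$ by hand, where the diagonal count is short enough to verify directly.
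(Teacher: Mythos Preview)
Your plan is essentially the paper's own argument: grouping by the diagonal $s-\ell$ is exactly the paper's grouping by the last letter $y_i$ of the $t$-prefix (since $i=s+t-\ell-1$), and taking the maximum free-length $M(D)$ on each diagonal is precisely the paper's observation that the longest $[d]^i$ block absorbs all shorter ones ending in the same letter. One small correction to your heuristic: the $-d^{\lfloor t/2\rfloor}$ term does not come from the forward and backward families overlapping on a diagonal---in fact the type~(2) and type~(3) families occupy disjoint ranges of last letters $y_i$ (namely $i>t$ and $i<t$)---but simply from the parity effect that one exponent in $\sum_D d^{M(D)}$ appears once rather than twice, which you will see as soon as you compute $M(D)=\lfloor(2t-i)/2\rfloor$ explicitly.
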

\begin{proof}
    Following Lemma \ref{ball}, the $t$-prefixes in $B_t(y)$ take one of the following three forms (matching the types in Lemma \ref{ball}).
\begin{enumerate}
    \item  $y_1y_2 \ldots y_t$;
    \item  $[d]^g \oplus y_{b-f+1} \ldots y_{t+b-f-g}$;
    \item  $[d]^{f-c} \oplus y_{b+1} \ldots y_{t+b+c-f}$.
\end{enumerate}

In order to more easily count these $t$-prefixes, we will sort them by the last letter that appears in the $t$-prefix, and then sort them from longest $[d]^i$ prefix to smallest.  Since the largest $[d]^i$ prefix also counts the strings with smaller $[d]^j$ prefix so long as the strings end in the same letter, this will allow us to count unique prefixes.  We begin by rewriting the types of prefixes so as to more easily do this.

\begin{enumerate}
    \item  $y_1y_2 \ldots y_t$;
    \item  Recall the initial requirements for $b,f,g$ from Lemma \ref{ball}.  We find the range of $y$-subsequences by noticing that $b \geq g+1$, $f \leq t - b - g \leq t-2g-1$, and also that $b-f$ is maximized whenever $f=0$. If $f=0$, then we have either $b=t-g$, or if $g$ is large enough (i.e. $g=(t-1)/2$) we have $b=g+1$.  Combined, this gives us the following equations.
\begin{eqnarray*}
    \min (b-f) & = & (g+1)-(t-2g-1) \\
    & = & 3g+2-t, \hbox{ and } \\
    \max (b-f) & = & \max ( g+1, t-g) \\
    & = & t-g.
\end{eqnarray*}

Hence for $0 \leq g \leq \frac{t-1}{2}$:
\begin{eqnarray*}
    {[d]}^g & \oplus & y_{3g+2-t+1} \ldots y_{2g+2} \\
    & \vdots & \\
    {[d]}^g & \oplus & y_{t-g+1} \ldots y_{2t-2g}
\end{eqnarray*}

\noindent  Now we consider all of the possible last letters that might appear.

Last letters:  $y_i$ such that $2g+2 \leq i \leq 2t-2g$.

Range:  $y_i$ is a last letter whenever $t+1 \leq i \leq 2t$.

\noindent  So as to minimize the amount of double-counting, we index each of these $y_i$'s that appear by the choice of $g$ that forces it to appear last.

Max $g$ for each $y_i$:  $\lfloor \frac{2t-i}{2} \rfloor$.
    \item  Note that in this case, we can cover all cases with $c > 0$ by a different case with $c=0$, so we may just consider the cases $c=0$ to simplify things.  This is simply because if $c>0$, we may take $f'=f-c$, $c'=0$ to obtain the same $t$-prefix with smaller choices of $f,b,c$.  We use the same process as in (2) to determine the possible last letters and index them to minimize double-counting.
\begin{enumerate}
    \item  For $0 \leq f \leq \frac{t+1}{2}$:
\begin{eqnarray*}
    {[d]}^f & \oplus & y_1 \ldots y_{t-f} \\
    & \vdots & \\
    {[d]}^f & \oplus & y_f \ldots y_{t-1}
\end{eqnarray*}
Last letters:  $y_{\frac{t-1}{2}}, \ldots, y_{t-1}$.

Range:  $y_i$ is a last letter whenever $t-f \leq i \leq t-1$.

Max $f$ for each $y_i$:  $\frac{t+1}{2}$.

    \item  For $ \frac{t+1}{2} < f < t$ (recall we eliminated $f=t$):
\begin{eqnarray*}
    {[d]}^f & \oplus & y_1 \ldots y_{t-f} \\
    & \vdots & \\
    {[d]}^f & \oplus & y_{t-f+1} \ldots y_{2t-2f}
\end{eqnarray*}
Last letters:  $y_1, y_2, \ldots , y_{t-2}$.

Range:  $y_i$ is a last letter whenever $t-f \leq i \leq 2t-2f$.

Max $f$ for each $i$:  $\frac{2t-i}{2}$.
\end{enumerate}
\end{enumerate}

Note that because we require $n \geq 2t$, both cases (2) and (3) cover all possible $t$-prefixes.  That is, we cannot possibly have any $t$-prefixes that end in $[d]^k$ for any $k > 0$.  Additionally, note that each case covers a different range of last letters:  (1) $i=t$; (2) $t+1 \leq i \leq 2t$; and (3) $t-f \leq i \leq t-1$.  Hence we may count each case separately.

\begin{enumerate}
    \item  There is only one string in this case.

    \item  We showed previously that $\max (g) = \lfloor \frac{2t-i}{2} \rfloor$.  Thus we have the following formula.
$$\left\{
  \begin{array}{ll}
    d^{\frac{t-1}{2}}+2 \cdot \sum_{j=0}^{\frac{t-3}{2}} d^j, & \hbox{if $t$ is odd;} \\
    2 \cdot \sum_{j=0}^{\frac{t-2}{2}} d^j, & \hbox{if $t$ is even.}
  \end{array}
\right.$$

    \item  In this case, our subcases (a) and (b) overlap.  We break up our ranges slightly differently this time to determine $\max (f)$.
    \begin{enumerate}
        \item  $1 \leq i < \frac{t-1}{2}$.

            In this range for $i$, we must be in the higher range for $f$, so we have $\max (f) = \lfloor \frac{2t-i}{2} \rfloor$.

        \item  $\frac{t-1}{2} \leq i \leq t-2$.

            Considering both ranges for $f$, we have the following maximum value for $f$, depending on $i$.   $$\max(f) = \max \left( \frac{t+1}{2}, \left\lfloor \frac{2t-i}{2} \right\rfloor \right) = \left\lfloor \frac{2t-i}{2} \right\rfloor$$

        \item  $i=t-1$.

            For this value of $i$, we must be in the lower range for $f$, and hence we have $\max (f) = \lfloor \frac{t+1}{2} \rfloor = \lfloor \frac{2t-i}{2} \rfloor$.
    \end{enumerate}

    Hence all cases (a)-(c) have $\max (f) = \lfloor \frac{2t-i}{2} \rfloor$.  Thus we have the following formula.

$$\left\{
  \begin{array}{ll}
    2 \cdot \sum_{j=\frac{t+1}{2}}^{t-1} d^j, & \hbox{if $t$ is odd;} \\
    -d^{\frac{t}{2}}+2 \cdot \sum_{j=\frac{t}{2}+1}^{t-1} d^j, & \hbox{if $t$ is even.}
  \end{array}
\right.$$

\end{enumerate}

Now when we combine all of our equations we get the following final count. $$1-d^{\lfloor \frac{t}{2} \rfloor}+2 \cdot \sum_{j=0}^{t-1}d^j$$

Note that this provides only an upper bound on our $t$-prefixes - if we have repeated letters than we may have double-counted.
\end{proof}

Now we are ready to prove our theorem.

\begin{proof}[Proof of Theorem \ref{LargeN}]
    Consider two arbitrary strings: $x=x_1x_2\ldots x_n$ and $y=y_1 y_2 \ldots y_n$.  We will show that these two strings cannot be $t$-twins by showing that $B_t(x)\setminus B_t(y) \neq \emptyset$.  This will be done in two cases:  $x_1x_2 \ldots x_{n-t} \neq y_1y_2 \ldots y_{n-t}$ and $x_{t+1}x_{t+2} \ldots x_n \neq y_{t+1}y_{t+2} \ldots y_n$.  Note that this covers all cases, since $x \neq y$ implies there is some $i \in [1,n]$ such that $x_i \neq y_i$.  Additionally, since $n \geq 2t$, we must have that $i \in [1,n-t] \cup [t+1,n]$.  Hence at least one of these two cases must be true.
\begin{enumerate}
    \item  $x_1x_2 \ldots x_{n-t} \neq y_1y_2 \ldots y_{n-t}$.

    We will show that there must exist some string in $B_t(x)$ that is not in $B_t(y)$.  In particular, there is a string $a \in [d]^t \oplus x_1 \ldots x_{n-t}$ such that $a \not \in B_t(y)$. We do this by counting the number of distinct $t$-prefixes in $B_t(y) \setminus [d]^t \oplus y_1y_2 \ldots y_{n-t}$, and showing that this number is smaller than $d^t$.  Note that because of the case that we are in, we need not consider the strings in $[d]^t \oplus y_1y_2 \ldots y_{n-t}$.  If we can show that the number of $t$-prefixes is smaller than $d^t$, then there must be some string $z \in B_t(x) \setminus B_t(y)$.

From Lemma \ref{counting}, we know that the total number of $t$-prefixes in $\mathcal{B}_t(y) \setminus [d]^t \oplus y_1y_2 \ldots y_{n-t}$ is equal to $1-d^{\lfloor \frac{t}{2} \rfloor}+2 \cdot \sum_{j=0}^{t-1}d^j$, and that one of those $t$-prefixes is $y_1 \ldots y_t$, which we may ignore because of the case that we are in.  Define $f(t) = -d^{\lfloor \frac{t}{2} \rfloor}+2 \cdot \sum_{j=0}^{t-1}d^j$ and $g(t) = d^t - f(t)$.  If we can show that $g(t)$ is always positive for $d \geq 3$, then we know that there exists a string $a \in \left( [d]^t \oplus x_1 \ldots x_{n-t} \right) \setminus \left( [d]^t \oplus y_1 \ldots y_{n-t} \right) \subseteq B_t(x) \setminus B_t(y)$. Then we know that $x$ and $y$ are not $t$-twins.

Consider our new function $g(t)$.
\begin{eqnarray*}
    g(t) & = & d^t + d^{t/2} - 2 \cdot \sum_{j=0}^{t-1} d^j \\
    & = & d^t + d^{t/2} - \frac{2 \cdot (d^t-1)}{d-1} \\
    & = & \frac{d^t(d-1)+d^{t/2}(d-1)-2(d^t-1)}{d-1}
\end{eqnarray*}
We will determine the nature of this function by finding the roots.  We find the roots by setting the numerator equal to 0 and making a substitution $x = d^{t/2}$.
\begin{eqnarray*}
    d^t(d-1)+d^{t/2}(d-1)-2(d^t-1) & = & x^2(d-3)+x(d-1)+2
\end{eqnarray*}
    The roots of this equation are $x=-1$ and $x = \frac{-4}{2d-6}$.  Reversing our substitution this equates to $d^{t/2} = -1$ and $d^{t/2} = \frac{-4}{2d-6}$.  The first root is impossible, and the second will only be possible when $2d-6 < 0$, or $d<3$.  Hence, if $d \geq 3$, our function has no real roots and is always positive.
    \item  $x_{t+1}x_{t+2} \ldots x_n \neq y_{t+1}y_{t+2} \ldots y_n$.

    In this case, we want to show that there exists some string:
$$a \in \left(x_{t+1} \ldots x_n \oplus [d]^t\right) \setminus \left(y_{t+1} \ldots y_n \oplus [d]^t \right) \subseteq B_t(x) \setminus B_t(y).$$
Because of the symmetric nature of the strings and edges in the de Bruijn graph, this case follows the same as the previous case, with analogous lemmas to Lemmas \ref{ball} and \ref{counting} for $t$-suffixes (instead of $t$-prefixes).  Thus we will again always have fewer than $d^t$ prefixes represented in $B_t(y) \setminus \left(y_{t+1} \ldots y_n \oplus [d]^t \right)$, so we will always be able to find the desired string $a$ that can identify $x$ from $y$.
\end{enumerate}
\end{proof}

%
%
%

As a separate result, we show that $\mathcal{B}(d,3)$ is $2$-identifiable for $d \geq 3$.

\begin{thm}
    $\mathcal{B}(d,3)$ is 2-identifiable whenever $d \geq 3$.
\end{thm}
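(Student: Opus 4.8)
The plan is to invoke the criterion stated earlier---$\mathcal{B}(d,3)$ is $2$-identifiable if and only if it has no $2$-twins---and prove directly that distinct vertices have distinct balls of radius $2$. The obstacle to simply quoting Theorem \ref{LargeN} is that here $n=3<4=2t$, so the $t$-prefixes and $t$-suffixes of $B_2(y)$ overlap and the counting of Lemma \ref{counting} no longer isolates a free prefix. Instead I would first specialize Lemma \ref{ball} to $t=2$, $n=3$ to obtain the explicit description
$$B_2(x)=\{s: s_1s_2\in\{x_1x_2,\,x_2x_3\}\}\cup\{s:s_2s_3\in\{x_1x_2,\,x_2x_3\}\}\cup\{s:s_1=x_3\}\cup\{s:s_3=x_1\},$$
collecting the vertex itself together with the vertices reached by the moves F, B, FF, FB, BF, BB. The two slices $\{s:s_1=x_3\}$ and $\{s:s_3=x_1\}$ (of size $d^2$ each) form the bulk of the ball, while the four \emph{window} families each contribute a $d$-element set.

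Next I would recover the outer letters $x_1$ and $x_3$ from the set $B_2(x)$ alone. I claim $x_3$ is the unique letter $c\in[d]$ such that every string beginning with $c$ lies in $B_2(x)$: the slice $\{s:s_1=x_3\}$ places all $d^2$ such strings in the ball, whereas for $c\ne x_3$ the families that can contribute a string with first letter $c$ force either the second letter into $\{x_2,x_3\}$ or the last letter to equal $x_1$, and a short check (on whether $c$ equals $x_1$ or $x_2$, and on the coincidences among $x_1,x_2,x_3$) shows these never exhaust all $d^2$ strings once $d\ge 3$. By the string-reversal automorphism of $\mathcal{B}(d,3)$, the symmetric statement recovers $x_1$ as the unique $c$ whose entire last-coordinate slice lies in $B_2(x)$. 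Hence $B_2(x)=B_2(y)$ forces $x_1=y_1$ and $x_3=y_3$.

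It then remains to pin down the middle letter: to show that if $x_1=y_1=:p$ and $x_3=y_3=:r$ but $x_2=:q\ne q':=y_2$, the balls differ. Here I would exhibit a separating string. When $p\ne r$, the string $pq\gamma$ with $\gamma\notin\{p,q'\}$ lies in $B_2(x)$ via the window $s_1s_2=pq$ but, outside one degenerate configuration, not in $B_2(y)$; in that remaining configuration the reflected string $\gamma qr$ does the job. The genuinely delicate case, which I expect to be the main obstacle, is $x_1=x_3=p$: then the two big slices $\{s_1=p\}$ and $\{s_3=p\}$ are common to both balls and absorb most candidates, so one must search inside $\{s:s_1\ne p,\ s_3\ne p\}$, where (for $q\ne p$) $B_2(x)$ reduces to $\{qp\gamma:\gamma\ne p\}\cup\{\alpha pq:\alpha\ne p\}$ and a string such as $\alpha pq$ with $\alpha\notin\{p,q'\}$ separates the balls, the sub-case $x=ppp$ being handled by noting that $B_2(ppp)$ meets $\{s:s_1\ne p,\ s_3\ne p\}$ trivially. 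Each selection uses $d\ge 3$ to supply a letter avoiding two forbidden values, and assembling the cases gives $x=y$; thus $\mathcal{B}(d,3)$ has no $2$-twins and is $2$-identifiable.
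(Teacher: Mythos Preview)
Your proposal is correct and follows essentially the same strategy as the paper: handle the cases $x_1\neq y_1$ and $x_3\neq y_3$ via the large slices $\{s:s_3=x_1\}$ and $\{s:s_1=x_3\}$ together with a count showing the complementary ball cannot absorb the whole slice, and then treat the remaining case $x_1=y_1$, $x_3=y_3$, $x_2\neq y_2$ by exhibiting an explicit separating string.

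The organization differs slightly. The paper works case by case, in each of Cases~1 and~2 listing the at most $3d-1$ forbidden $2$-prefixes (or $2$-suffixes) and picking one outside the list; you instead package the same count as a reconstruction lemma (``$x_3$ is the unique $c$ with $\{s:s_1=c\}\subseteq B_2(x)$''), which is a cleaner statement but the verification is the same enumeration. For the middle-letter case your split is $p\neq r$ versus $p=r$, whereas the paper splits into the three subcases $x_1x_2=y_2y_3$, $x_1=x_3$, and ``otherwise''; these partitions cover the same ground and your degenerate configuration $p=q'$, $q=r$ is exactly the paper's subcase~(1) ($x=abb$, $y=aab$), with your reflected witness $\gamma qr$ matching the paper's $cbb$. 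One small point to tighten: when you pass to the reflected string $\gamma qr$ in that degenerate configuration, the constraint on $\gamma$ should be $\gamma\notin\{p,r\}$ (not $\{p,q'\}$, since here $q'=p$), which is of course still available for $d\ge3$.
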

\begin{proof}
    Let $x = x_1x_2x_3$ and $y = y_1y_2y_3$ be distinct vertices in $\mathcal{B}(d,3)$.  We consider three cases.
    \begin{description}
        \item[Case 1]  $x_1 \neq y_1$.

        Let $a_1a_2a_3 \in [d] \oplus [d] \oplus x_1$ such that $a_1a_2$ is not one of the following strings or is not contained in one of the sets of strings.
        $$\begin{array}{c}
            [d] \oplus y_1 \\
            y_2y_3 \\
            y_3 \oplus [d] \\
            y_1y_2 \\
            \left[d\right] \oplus y_2
        \end{array}$$
        We have a total of $[d]^2$ options for $a_1a_2$, and this list contains at most $3d-1$ of those choices.  Hence for $d \geq 3$, there is always an option left for $a_1a_2$.  Then we have $a_1a_2a_3 \in B_2(x) \setminus B_2(y)$.
        \item[Case 2]  $x_3 \neq y_3$.

        Let $a_1a_2a_3 \in x_3 \oplus [d] \oplus [d]$ such that $a_2a_3$ is not one of the following strings or is not contained in one of the sets of strings.
        $$\begin{array}{c}
            y_1y_2 \\
            y_3 \oplus [d] \\
            \left[d\right] \oplus y_1 \\
            y_2 \oplus [d] \\
            y_2y_3
        \end{array}$$
        Note that this set of strings has at most $3d-1$ elements, and hence we can always find some choice for $a_2a_3$ that is allowed.  Then we have $a_1a_2a_3 \in B_2(x) \setminus B_2(y)$.
        \item[Case 3]  $x_2 \neq y_2$ and $x_1x_3 = y_1y_3$.

        We break this case up further into three subcases.
        \begin{enumerate}
            \item  If $x_1x_2 = y_2y_3$, then we must have $x=abb$ and $y=aab$ for some $a \neq b$.  Then $cbb \in B_2(x) \setminus B_2(y)$ for any choice of $c \in [d] \setminus \{a,b\}$.
            \item  If $x_1=x_3$, then $x=aba$ and $y=aca$ for some $b \neq c \in [d]$.  We must have either $b \neq a$ or $c \neq a$, and so without loss of generality we may assume $b \neq a$.  Then $kab \in B_2(x) \setminus B_2(y)$ for any choice of $k \in [d] \setminus \{a,c\}$.
            \item  Lastly, if we are not in either of the two previous subcases then we choose $a_1a_2a_3 \in x_1x_2 \oplus [d]$ with $a_3 \in [d] \setminus \{y_1,y_2\}$.  Then we have $a_1a_2a_3 \in B_2(x) \setminus B_2(y)$.
        \end{enumerate}
    \end{description}
\end{proof}

For the remaining cases where $n < 2t$, a different argument must be found.  While this problem remains open, we believe that the following result could be useful in solving these cases.

\begin{thm}\label{NDist}
    For any $y \in \mathcal{B}(d,n)$ with $d \geq 3$, there exists some vertex $x$ such that $d(y,x)=n$.
\end{thm}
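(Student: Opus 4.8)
The plan is to prove the equivalent statement that $B_{n-1}(y)\neq V(\mathcal{B}(d,n))$, together with the trivial upper bound $d(y,x)\le n$ for every $x$. The upper bound is immediate: starting from $y$ and following $n$ consecutive forward edges replaces all $n$ coordinates by arbitrary symbols, so $B_n(y)=V(\mathcal{B}(d,n))$ and hence $d(y,x)\le n$ for all $x$. Consequently it suffices to exhibit a single vertex $x$ with $x\notin B_{n-1}(y)$; such an $x$ automatically satisfies $d(y,x)=n$.

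The first substantive step is to convert Lemma~\ref{ball}, applied with $t=n-1$, into a workable description of $B_{n-1}(y)$. Each type-(2) string is a free prefix, a contiguous block $y_{b-f+1}\ldots y_{n-f}$ of length $L=n-b$, and a free suffix; placing this block into its slot in $x$ determines an offset $\Delta=b-f-g$ between the two copies, and the number of steps used works out to $2(n-L)-|\Delta|$. The type-(3) strings give the mirror situation with $\Delta\le 0$. Re-expressing the constraint $f+b+g\le n-1$ in terms of $L$ and $\Delta$ yields the clean criterion: a vertex $x\neq y$ lies in $B_{n-1}(y)$ if and only if, for some offset $\Delta$, the strings $x$ and $y$ share a contiguous run of length at least $(n+1-|\Delta|)/2$ at that offset. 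The whole problem thus reduces to producing some $x\neq y$ whose longest common run with $y$ at offset $\Delta$ is at most $T_\Delta=\lfloor(n-|\Delta|)/2\rfloor$ for every $\Delta\in\{-(n-1),\dots,n-1\}$.

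For the construction I would reverse $y$ and twist it by a permutation with no short cycles: fix a $d$-cycle $\pi$ (for instance $\pi(a)=a+1\bmod d$) and set $x_i=\pi(y_{n+1-i})$. A common run at offset $\Delta$ forces $\pi(y_r)=y_{m-r}$ for consecutive indices $r$, where $m=n+1+\Delta$, so the matched positions come in pairs symmetric about the centre of the overlap window. If such a run met the centre, or contained a symmetric pair $\{r,m-r\}$, then $y_r$ would be a fixed point of $\pi$ or of $\pi^2$. Since $\pi$ is a $d$-cycle with $d\ge 3$, both $\pi$ and $\pi^2$ are fixed-point-free, so no run can cross the centre; hence every run occupies at most one half of the overlap and has length at most $T_\Delta$. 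The same fixed-point-free-square property gives $x\neq y$ (for $n$ even the only alternative would force $y_i=\pi^2(y_i)$ on each symmetric pair of indices), and this is precisely where the hypothesis $d\ge 3$ enters.

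I expect the main obstacle to be the bookkeeping in the middle step: reading off the offset $\Delta$ and block length $L$ from the type-(2)/(3) parametrization of Lemma~\ref{ball}, checking that the minimal cost is exactly $2(n-L)-|\Delta|$ and not smaller via some competing representation, and handling the rounding in the threshold $(n+1-|\Delta|)/2$ so that ``every run has length $\le T_\Delta$'' is genuinely equivalent to $x\notin B_{n-1}(y)$. Once that equivalence is nailed down, the reverse-and-twist construction and its verification are short, with the no-centre-crossing argument doing all of the work and cleanly isolating why the binary case $d=2$ must be excluded.
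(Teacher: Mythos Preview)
Your plan is correct and takes a genuinely different route from the paper. The paper proves the theorem by induction on $n$ in steps of two: given $\overline y\in\mathcal B(d,n+2)$, it strips the outer letters to obtain $y\in\mathcal B(d,n)$, uses the inductive hypothesis to find $x$ with $d(x,y)=n$, and then extends $x$ on both ends by symbols avoiding $\{y_n,y_{n+1}\}$ and $\{y_0,y_1\}$; the verification that no FBF or BFB path of length $\le n+1$ reaches $\overline x$ is a case split that repeatedly feeds back into the inductive hypothesis with shifted parameters.

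Your argument is direct: you recast Lemma~\ref{ball} (with $t=n-1$) as the criterion ``$x\in B_{n-1}(y)\setminus\{y\}$ iff $x$ and $y$ share a contiguous block of length $L$ at some offset $\Delta$ with $2(n-L)-|\Delta|\le n-1$'', and then exhibit $x_i=\pi(y_{n+1-i})$ for a $d$-cycle $\pi$. The fixed-point-freeness of $\pi$ and $\pi^2$ (this is exactly where $d\ge3$ enters) forces every matching run at offset $\Delta$ to lie strictly on one side of the centre $m/2$ of the overlap window, hence to have length at most $\lfloor(n-|\Delta|)/2\rfloor$, which is one short of the threshold. The bookkeeping you flag is real but routine; note in particular that you only need the implication ``$x\in B_{n-1}(y)\setminus\{y\}\Rightarrow$ long common block'', and this drops straight out of the type-(2)/(3) description with $L=n-b$ (resp.\ $n-f$) together with the easy inequality $2(n-L)-|\Delta|\le f+b+g$.

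What each approach buys: your construction gives an explicit, uniform witness $x$ depending only on $y$ and is conceptually cleaner, isolating the role of $d\ge3$ in a single algebraic fact about $\pi^2$. The paper's induction avoids ever formulating the block--offset distance criterion, trading that for a longer parameter-chasing case analysis at the inductive step.
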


\begin{proof}
    We proceed by induction on $n$ and show that if the claim is true in $\mathcal{B}(d,n)$ for $n \geq 2$, then the claim is true for $\mathcal{B}(d,n+2)$.
    \begin{description}
        \item[Base Case:]  $n=2$.  Since $d \geq 3$, our vertex $y=y_1y_2$ can use at most two symbols from our alphabet.  Suppose that $z \in [d] \setminus \{y_1,y_2\}$.  Then $d(y, zz)=2$.

            As our induction proceeds from string length $n$ to $n+2$, we require an additional base case of $n=3$.  If our vertex $y=y_1y_2y_3$ only uses two distinct symbols from $[d]$, then the string $x = a^n$ where $a \in[d] \setminus \{y_1, y_2,y_3\}$ satisfies $d(y,x)=3$.  Otherwise, we must have $[d]=\{y_1,y_2,y_3\}$.  Then the vertex $x=(y_2)^3$ satisfies $d(y,x)=3$.

        \item[Induction Step:]  Let $\overline{y}=y_0 \oplus y \oplus y_{n+1}$ be arbitrary. By the induction hypothesis, there exists some $x \in \mathcal{B}(d,n)$ such that $d(x,y)=n$.  We will show that $d(\overline{y}, \overline{x})=n+2$, where $\overline{x}=x_0 \oplus x \oplus x_{n+1}$ with $x_0 \in [d] \setminus \{y_n,y_{n+1}\}$ and $x_{n+1} \in [d] \setminus \{y_0, y_1\}$.  We will show that $\overline{x} \not \in B_{n+1}(y)$ using Lemma \ref{ball} and considering each type of path and resulting string individually.
        \begin{enumerate}
            \item  $\overline{x}=\overline{y}$.  Not possible since $x \neq y$.
            \item  FBF-type.

                First, from Lemma \ref{ball}, we know that since $d(x,y)=n$ there cannot exist any choice of $f,b,g$ such that $f+b+g \leq n-1$, $b>0$, $b > f$, and $b>g$ such that $$x \in [d]^g \oplus y_{b-f+1} \ldots y_{n-f} \oplus [d]^{b-g}.$$  In other words, we must have $$y_{b-f+1} \ldots y_{n-f} \neq x_{g+1} \ldots x_{g+n-b}$$ for all such choices of $f,b,g$.

                Now we will show that there does not exist an FBF-path of length $n+1$ or less between $\overline{x}$ and $\overline{y}$.  Fix some $f,b,g$ such that $f+b+g \leq n+1$, $b>0$, $b>f$, and $b>g$.  From Lemma \ref{ball} all vertices $z_0z_1 \ldots z_{n+1}$ that can be reached by an FBF-path with parameters $(f,b,g)$ from $\overline{y}$ must have $$y_{b-f} \ldots y_{n-f+1} = z_{g} \ldots z_{g+n+1-b}.$$

            \begin{enumerate}
                \item  If $f=0$, $b=k$, and $g=0$, then we consider $1 \leq k \leq n-1$ and $n \leq k \leq n+1$ separately.  First, if $1 \leq k \leq n-1$, then our induction hypothesis with parameters $(0,k,0)$ tells us that $x_1 \ldots x_{n-k} \neq y_{k+1} \ldots y_n$ when we examine FBF-paths with parameters $(0,k,0)$ from $y$.  Hence we cannot have $x_0 \ldots x_{n-k+1} = y_k \ldots y_{n+1}$, and so no such FBF-path exists between $\overline{x}$ and $\overline{y}$.  Next, if $n \leq k \leq n+1$, then since $x_0 \neq y_n, y_{n+1}$, we will never have $x_0 x_1=y_n y_{n+1}$ or $x_0=y_{n+1}$, and so again no such FBF-path exists in $\mathcal{B}(d,n+2)$.
                \item  If $f \geq 1$, then we must have $b \geq 2$.  In this case, in order for such an FBF-path to exists from $\overline{y}$ to $\overline{x}$ we must have $x_g \ldots x_{g+n-b+1} = y_{b-f} \ldots y_{n+1-f}$.  However our induction hypothesis with parameters $(f-1, b-1, g)$ tells us that $x_{g+1} \ldots x_{g+n-b+1} \neq y_{b-f+1} \ldots y_{n-f+1}$, and so no such FBF-path exists in $\mathcal{B}(d,n+2)$.
                \item  If $g \geq 1$, then again we must have $b \geq 2$.  In this case, in order for such an FBF-path to exist we must have $x_g \ldots x_{g+n-b+1} = y_{b-f} \ldots y_{n+1-f}$.  However our induction hypothesis with parameters $(f, b-1, g-1)$ tells us that $x_g \ldots x_{g+n-b} \neq y_{b-f} \ldots y_{n-f}$, and so no such FBF-path exists in $\mathcal{B}(d,n+2)$.
            \end{enumerate}

            Hence we cannot have an FBF-path of length less than $n+2$ between $\overline{y}$ and $\overline{x}$ in $\mathcal{B}(d,n+2)$.
            \item  BFB-type.

First, from Lemma \ref{ball}, we know that since $d(x,y)=n$ there cannot exist any choice of $b,f,c$ such that $b+f+c \leq n-1$, $f>0$, $f > b$, and $f>c$ such that $$x \in [d]^{f-c} \oplus y_{b+1} \ldots y_{n-f+b} \oplus [d]^{c}.$$  In other words, we must have $$y_{b+1} \ldots y_{n-f+b} \neq x_{f-c+1} \ldots x_{n-c}$$ for all such choices of $b,f,c$.

Now we will show that there does not exist a BFB-path of length $n+1$ or less between $\overline{x}$ and $\overline{y}$.  Fix some $b,f,c$ such that $b+f+c \leq n+1$, $f>0$, $f>b$, and $f>c$.  From Lemma \ref{ball} all vertices $z_0z_1 \ldots z_{n+1}$ that can be reached by a BFB path from $\overline{y}$ with these parameters must have $$y_{b} \ldots y_{n+1-f+b} = z_{f-c} \ldots z_{n+1-c}.$$

            \begin{enumerate}
                \item  If $b=0$, $f=k$, and $c=0$, then we consider $1 \leq k \leq n-1$ and $n \leq k \leq n+1$ separately.  First, if $1 \leq k \leq n-1$, then our induction hypothesis tells us that $x_{k+1} \ldots x_{n} \neq y_{1} \ldots y_{n-k}$ when we examine BFB-paths with parameters $(0,k,0)$ from $y$.  Hence we cannot have $x_k \ldots x_{n+1} = y_0 \ldots y_{n-k+1}$ in $\mathcal{B}(d,n+2)$, so no such BFB-path exists between $\overline{x}$ and $\overline{y}$.

                    Next, if $n \leq k \leq n+1$, then since $x_{n+1} \neq y_0, y_{1}$, we will never have $x_nx_{n+1}=y_0y_1$ or $x_{n+1}=y_0$, and so again no such BFB-path exists in $\mathcal{B}(d,n+2)$.
                \item  If $b \geq 1$, then we must have $f \geq 2$.  In this case, in order for such a BFB-path to exist from  $\overline{x}$ to $\overline{y}$ we must have $x_{f-c} \ldots x_{n+1-c} = y_{b} \ldots y_{n+1-f+b}$.  However our induction hypothesis with parameters $(b-1, f-1, c)$ tells us that $$x_{f-c} \ldots x_{n-c} \neq y_{b} \ldots y_{n-f+b},$$ and so no such BFB-path exists in $\mathcal{B}(d,n+2)$.
                \item  If $c \geq 1$, then again we must have $f \geq 2$.  In this case, in order to have such a BFB-path between $\overline{x}$ and $\overline{y}$ we must have $x_{f-c} \ldots x_{n+1-c} = y_{b} \ldots y_{n+1-f+b}$.  However our induction hypothesis with parameters $(b, f-1, c-1)$ tells us that $x_{f-c+1} \ldots x_{n-c+1} \neq y_{b+1} \ldots y_{n-f+1+b}$, and so no such BFB-path exists in $\mathcal{B}(d,n+2)$.
            \end{enumerate}

    Hence we cannot have a BFB-path of length less than $n+2$ between $\overline{y}$ and $\overline{x}$ in $\mathcal{B}(d,n+2)$.
        \end{enumerate}

    Therefore there is no path from $\overline{y}$ to $\overline{x}$ of length $n+1$ or smaller, and so $d(\overline{y},\overline{x})\geq n+2$.  As it is well known that the de Bruijn graph $\mathcal{B}(d,n+2)$ has diameter $n+2$ (see \cite{Baker}), we must have $d(\overline{y},\overline{x})=n+2$.
    \end{description}
\end{proof}

In other words, Theorem \ref{NDist} tells us the eccentricity of every node in the graph $\mathcal{B}(d,n)$ is $n$ for $d \geq 3$, and so the radius of $\mathcal{B}(d,n)$ is $n$.  Note that when $d=2$ this does not always hold.  For example, the graph $\mathcal{B}(2,3)$ does not have any vertex at distance 3 from $011$.  See Figure \ref{B23Ex}.

\begin{figure}
\begin{center}

\begin{tikzpicture}[-,>=stealth',auto,node distance=2cm,
  thick,main node/.style={circle,draw,font=\sffamily\bfseries,scale=0.75},new node/.style={circle,fill=black,text=white,draw,font=\sffamily\bfseries,scale=0.75}]

  \node[main node] (0) {000};
  \node[main node]  (1) [above right of=0] {001};
  \node[main node] (2) [below right of=1] {010};
  \node[main node]  (4) [below right of=0] {100};
  \node[main node] (5) [right of=2]       {101};
  \node[main node]  (6) [below right of=5] {110};
  \node[new node]  (3) [above right of=5] {011};
  \node[main node] (7) [below right of=3] {111};

  \path[every node/.style={font=\sffamily\small}]
    (0) edge node [left]      {} (1)
        edge [loop left] node {} (0)
    (1) edge node [left]      {} (3)
        edge node [right]     {} (2)
    (2) edge [bend right] node{} (5)
        edge node [right]     {} (4)
    (3) edge node [right]     {} (6)
        edge node [right]     {} (7)
    (4) edge node [left]      {} (0)
        edge node [right]     {} (1)
    (5) edge [bend right] node{} (2)
        edge node [right]     {} (3)
    (6) edge node [right]     {} (5)
        edge node [right]     {} (4)
    (7) edge [loop right] node{} (7)
        edge node [right]     {} (6);

\end{tikzpicture}

\end{center}
\caption{$\mathcal{B}(2,3)$ does not contain any vertices at distance 3 from 011.}  \label{B23Ex}
\end{figure}
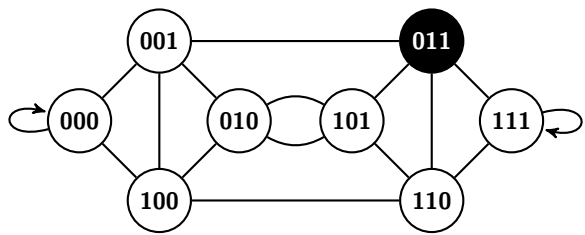

\section{Binary de Bruijn Graphs}\label{BSection}

We now consider the binary de Bruijn graphs.  We provide one result within this range, and show that $\mathcal{B}(2,n)$ is always 1-identifiable.

\begin{thm}
    For $n\geq 3$, the graph $\mathcal{B}(2,n)$ is identifiable.
\end{thm}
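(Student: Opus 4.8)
The plan is to use the twin characterization (the Fact): it suffices to show that $\mathcal{B}(2,n)$ contains no $1$-twins, i.e. that $B_1(x) \neq B_1(y)$ for every pair of distinct vertices $x = x_1\ldots x_n$ and $y = y_1 \ldots y_n$. I would follow the two-case skeleton of the proof of Theorem \ref{LargeN} specialized to $t = 1$, exhibiting an element of $B_1(x) \setminus B_1(y)$ (or of $B_1(y)\setminus B_1(x)$), and treating separately the handful of configurations where the binary alphabet makes the generic counting argument of Lemma \ref{counting} fail. Throughout I would write $B_1(x) = \{x\} \cup (x_2\ldots x_n \oplus [2]) \cup ([2]\oplus x_1\ldots x_{n-1})$. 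Since string reversal $x_1\ldots x_n \mapsto x_n\ldots x_1$ is an automorphism of $\mathcal{B}(2,n)$ that interchanges prefixes with suffixes and the prepend-set with the append-set, it is enough to treat the prefix case and obtain the suffix case for free.

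\emph{First case:} $x_1\ldots x_{n-1} \neq y_1 \ldots y_{n-1}$. As in Theorem \ref{LargeN}, I would search for a distinguishing string among the two prepend-neighbours $[2]\oplus x_1\ldots x_{n-1}$ of $x$; these share the common suffix $x_1\ldots x_{n-1}$, which differs from $y_1\ldots y_{n-1}$, so such a string can lie in $B_1(y)$ only if it coincides with $y$ itself or with an append-neighbour of $y$ (the prepend-neighbours of $y$ are excluded by the case hypothesis). Tracking first letters, as Lemma \ref{counting} does, shows that at most one of the two prepend-neighbours of $x$ is blocked unless $y_1 \neq y_2$ together with two exact coincidence conditions hold; a short letter-by-letter computation collapses these conditions to $y = \bar b\, b^{n-1}$ and $x = b^n$ for some letter $b$ (with $\bar b$ its complement). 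In every other situation one of the two prepend-neighbours of $x$ already lies in $B_1(x)\setminus B_1(y)$, so $x$ and $y$ are not twins.

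Here lies the main obstacle and its resolution. Unlike the $d\geq 3$ regime, for $d = 2$ the bound of Lemma \ref{counting} equals $d^{\,t} = 2$ rather than falling strictly below it, so the counting alone cannot separate the boundary configurations $\{b^n,\ \bar b\, b^{n-1}\}$ (and, by reversal, $\{b^n,\ b^{n-1}\bar b\}$). These are exactly the near-constant pairs, and I would dispatch them by hand using the self-loop at the constant vertex $b^n$: for $n \geq 3$ one has $|B_1(b^n)| = 3$ while $|B_1(\bar b\, b^{n-1})| = 5$, so the balls cannot be equal. Equivalently, the prepend-neighbour $b\,\bar b\, b^{n-2}$ of $\bar b\, b^{n-1}$ lies in $B_1(\bar b\, b^{n-1})\setminus B_1(b^n)$. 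This is precisely the step that requires $n \geq 3$: for $n = 2$ the hypothesis genuinely fails, since $01$ and $10$ are $1$-twins in $\mathcal{B}(2,2)$.

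\emph{Second case:} if $x_1\ldots x_{n-1} = y_1\ldots y_{n-1}$, then $x \neq y$ forces $x_n \neq y_n$, and I would apply the reversal automorphism to reduce this to the first case; the resulting suffix argument distinguishes $x$ from $y$ except in the mirror boundary configuration $\{b^n,\ b^{n-1}\bar b\}$, already handled above. The two cases — differing within the first $n-1$ letters, or agreeing there and hence differing in the last letter — cover every pair of distinct vertices, which completes the argument. The only genuinely delicate point is the verification that the two coincidence conditions in the first case collapse exactly to the near-constant pairs; this is the calculation I expect to be most error-prone, and I would carry it out explicitly, coordinate by coordinate, in the spirit of Lemma \ref{counting}.
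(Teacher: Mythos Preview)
Your approach is sound and takes a genuinely different route from the paper's. The paper handles $n=3$ by simply exhibiting a concrete identifying code, and for $n \geq 4$ argues by contradiction: assuming $B_1(x)=B_1(y)$, it uses the consequence $x \in B_1(y)$ to pin down how $x$ and $y$ are related (either $x$ is a prepend-neighbour of $y$ or an append-neighbour of $y$), then pushes further elements back and forth between the two balls, each step splitting into subcases, until every branch reaches a contradiction. Your route instead follows the template of Theorem~\ref{LargeN}: you directly look for a witness in $B_1(x)\setminus B_1(y)$ among the two prepend-neighbours of $x$, isolate the exceptional pairs where the first-letter count fails, and finish those by hand. Your version is more uniform (no separate treatment of $n=3$) and conceptually cleaner, leaning on the reversal automorphism; the paper's version is more mechanical but entirely self-contained.

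There is one small slip to watch for in exactly the step you flag as delicate. The two coincidence conditions in Case~1 (namely $x_1\ldots x_{n-1}=y_2\ldots y_n$ and $x_1\ldots x_{n-2}=y_3\ldots y_n$, which arise when $y_1\neq y_2$) force $y=\bar b\,b^{n-1}$ and $x_1\ldots x_{n-1}=b^{n-1}$, but they say nothing about $x_n$. So the boundary within Case~1 is $x\in\{b^n,\ b^{n-1}\bar b\}$, not only $x=b^n$. The pair $(x,y)=(b^{n-1}\bar b,\ \bar b\,b^{n-1})$ lands in your Case~1 (the length-$(n-1)$ prefixes $b^{n-1}$ and $\bar b\,b^{n-2}$ differ) and is not covered by your cardinality argument, since neither vertex is constant. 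It is easy to dispatch---for instance $b^{n-2}\bar b\,\bar b\in B_1(x)\setminus B_1(y)$, or simply swap the roles of $x$ and $y$ and note that the prepend-neighbours of $y$ are not both in $B_1(x)$---but you should expect this extra pair to surface when you carry out the collapse explicitly.
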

\begin{proof}
    For $n=3$, the following is a minimum 1-identifying code on $\mathcal{B}(2,3)$.  $$\{001, 010, 011, 101\}$$

    When $n \geq 4$, we have the following proof, with many cases.  We will prove this result by showing that it is not possible to have two vertices $x$ and $y$ that are twins.  Suppose (for a contradiction) that $x$ and $y$ are in fact twins in $\mathcal{B}(2,n)$.  First, the 1-balls for each vertex are as follows.
$$\begin{array}{rclcrcl}
    B_1(x) & = & \left\{  \begin{array}{l} x_1 x_2 \ldots x_n \\ 0 x_1 \ldots x_{n-1} \\ 1 x_1 \ldots x_{n-1} \\ x_2 \ldots x_n 0 \\ x_2 \ldots x_n 1  \end{array} \right\} & & B_1(y) & = & \left\{ \begin{array}{l} y_1 y_2 \ldots y_n \\
            0 y_1 \ldots y_{n-1} \\
            1 y_1 \ldots y_{n-1} \\
            y_2 \ldots y_n 0 \\
          y_2 \ldots y_n 1 \end{array} \right\}
\end{array}$$

Without loss of generality, we assume that $x_1 = 0$.  Then we have two cases:  either $x_1x_2 \ldots x_n = 0y_1 \ldots y_{n-1}$, or $x_1x_2 \ldots x_n \in \{y_2 \ldots y_n0, y_2 \ldots y_n 1\}$.
\begin{enumerate}
    \item  $x_1x_2 \ldots x_n = 0y_1 \ldots y_{n-1}$.

            In this case, we know that $0x_2 \ldots x_n = 0y_1 \ldots y_{n-1}$, and so $x_2 \ldots x_n = y_1 \ldots y_{n-1}$.  From this, we know the following equality holds.  $$\{x_2 \ldots x_n 0, x_2 \ldots x_n 1\} = \{y_1 y_2 \ldots y_n, y_1 y_2 \ldots \overline{y_n}\}$$  This gives us two cases:  either $y_1y_2 \ldots \overline{y_n} \in \{0y_1 \ldots y_{n-1}, 1y_1 \ldots y_{n-1}\}$, or $y_1y_2 \ldots \overline{y_n} \in \{y_2 \ldots y_n 0, y_2 \ldots y_n 1\}$.
\begin{enumerate}
    \item  $y_1y_2 \ldots \overline{y_n} \in \{0y_1 \ldots y_{n-1}, 1y_1 \ldots y_{n-1}\}$

            The fact that $y_2 \ldots \overline{y_n} = y_1 \ldots y_{n-1}$ implies the following. $$y_1 = y_2 = \cdots = y_{n-1} = \overline{y_n}$$  Because we are in Case 1 and $x_2 \ldots x_n = y_1 \ldots y_{n-1}$, we also have the following equalities. $$x_2 = x_3 = \cdots = x_n = y_1$$  Hence our 1-balls must be as shown below for some $a \in \{0,1\}$.
$$\begin{array}{rclcrcl}
    B_1(x) & = & \left\{  \begin{array}{l} 0 a \ldots a \\
0 0 a \ldots a \\
1 0 a \ldots a \\
a \ldots a 0 \\
a \ldots a 1  \end{array} \right\} & & B_1(y) & = & \left\{ \begin{array}{l} a \ldots a \overline{a} \\
            0 a \ldots a \\
            1 a \ldots a \\
            a \ldots a \overline{a} 0 \\
            a \ldots a \overline{a} 1 \end{array} \right\}
\end{array}$$

Note that since $n \geq 4$, we have two strings in $B_1(y)$ that have different second-to-last and third-to-last letters, however in $B_1(x)$ there are no such strings.  Hence these sets cannot possibly be equal, which is a contradiction.
    \item  $y_1y_2 \ldots \overline{y_n} \in \{y_2 \ldots y_n 0, y_2 \ldots y_n 1\}$

This implies that $y_1y_2 \ldots y_{n-1} = y_2 \ldots y_n$, and so we have the following chain of equalities. $$y_1=y_2 = \cdots = y_{n-1} = y_n$$  Hence $y=a^n$ and $x=0a^{n-1}$ for some $a \in \{0,1\}$.  Since $x \neq y$, we must have $a=1$ and thus our 1-balls, given below, are clearly not equal - a contradiction.

$$\begin{array}{rclcrcl}
    B_1(x) & = & \left\{  \begin{array}{l} 0 1 \ldots 1 \\
0 0 1 \ldots 1 \\
1 0 1 \ldots 1 \\
1 \ldots 1 0 \\
1 \ldots 1 1  \end{array} \right\} & & B_1(y) & = & \left\{ \begin{array}{l} 1 1 \ldots 1  \\
            0 1 \ldots 1 \\
            1 \ldots 1  0  \end{array} \right\}
\end{array}$$
\end{enumerate}
    \item  $x_1x_2 \ldots x_n \in \{y_2 \ldots y_n0, y_2 \ldots y_n 1\}$ and $y_2=0$.

    From this, we have the following 1-balls.
$$\begin{array}{rclcrcl}
    B_1(x) & = & \left\{  \begin{array}{l} 0x_2 \ldots x_n \\
0 0 x_2 \ldots x_{n-1} \\
1 0 x_2 \ldots x_{n-1} \\
x_2 \ldots x_n 0 \\
x_2 \ldots x_n 1  \end{array} \right\} & & B_1(y) & = & \left\{ \begin{array}{l} y_1 0 x_2 \ldots x_{n-1} \\
            0 y_1 0 x_2 \ldots x_{n-2} \\
            1 y_1 0 x_2 \ldots x_{n-2} \\
            0 x_2 \ldots x_{n-1} 0 \\
            0 x_2 \ldots x_{n-1} 1 \end{array} \right\}
\end{array}$$
Now we have two cases:  either $1y_10x_2\ldots x_{n-2}= 10x_2 \ldots x_{n-1}$, or $1y_10x_2\ldots x_{n-2} \in \{x_2 \ldots x_n 0, x_2 \ldots x_n 1\}$.
\begin{enumerate}
    \item  $1y_10x_2\ldots x_{n-2}= 10x_2 \ldots x_{n-1}$.

    This statement implies that we have the following chain of equalities. $$y_3 = \cdots = y_n = x_2 = \cdots = x_{n-1}$$  In particular, we now know that $x=0a \ldots a$ and $y=0 0 a \ldots a$.  Hence our 1-balls are given below.
$$\begin{array}{rclcrcl}
    B_1(x) & = & \left\{  \begin{array}{l} 0 a \ldots a \\
0 0 a \ldots a \\
1 0 a \ldots a \\
a \ldots a 0 \\
a \ldots a 1  \end{array} \right\} & & B_1(y) & = & \left\{ \begin{array}{l} 0 0 a \ldots a \\
            0 0 0 a \ldots a \\
            1 0 0 a \ldots a \\
            0 a \ldots a 0 \\
            0 a \ldots a 1 \end{array} \right\}
\end{array}$$
Since $000a \ldots a \in B_1(y)$, the only way to have $B_1(x)=B_1(y)$ would require $a=0$, and thus $x =y$, which is a contradiction.
    \item  $1y_10x_2\ldots x_{n-2} \in \{x_2 \ldots x_n 0, x_2 \ldots x_n 1\}$ and $x_2=1$.

In this instance, we know that $x_2 \ldots x_n = 1 y_1 0 x_2 \ldots x_{n-3}$, and hence $x_5 \ldots x_n = x_2 \ldots x_{n-3}$.  This tells us that $x=01y_101y_1 \ldots $ and $y= y_101y_101\ldots $.  In particular, our 1-balls are now shown below.
$$\begin{array}{rclrcl}
    B_1(x) & = & \left\{  \begin{array}{l} 01y_101y_1 \ldots \\
0 01y_101y_1 \ldots \\
1 01y_101y_1 \ldots \\
1y_101y_1 \ldots 0 \\
1y_101y_1 \ldots 1  \end{array} \right\} &  B_1(y) & = & \left\{ \begin{array}{l} y_101y_101\ldots \\
            0 y_101y_101\ldots \\
            1 y_101y_101\ldots \\
            01y_101\ldots 0 \\
            01y_101\ldots 1 \end{array} \right\}
\end{array}$$
Note that $B_1(y)$ contains two distinct strings beginning with $01$, while $B_1(x)$ contains only one such string.  Hence it is not possible that $B_1(x)=B_1(y)$, which contradicts our initial assumption.
\end{enumerate}
\end{enumerate}
\end{proof}

Due to the fact that the eccentricity in the binary de Bruijn graph $\mathcal{B}(2,n)$ is not always equal to $n$, we know that there will be cases when a $t$-identifying code does not exist.

\section{Future Work}

We have the following questions to consider.

\begin{enumerate}
    \item  Is there a pattern for when $\mathcal{B}(2,n)$ is $t$-identifiable?  A related question is to determine the eccentricity for the undirected binary de Bruijn graph.
    \item  Can we determine when $\mathcal{B}(d,n)$ is $t$-identifiable for $d \geq 3$ and $n < 2t$?  Computer testing has led us to conjecture that for $d \geq 3$ and $n \geq 2$, there exists a $t$-identifying code in $\mathcal{B}(d,n)$ for $1 \leq t \leq n-1$.
    \item  What is the minimum possible size for an identifying code in $\mathcal{B}(d,n)$?  Are there any efficient constructions for either optimal or non-optimal identifying codes in these graphs?
\end{enumerate}

\bibliographystyle{amsplain}

\end{document}